\DeclareRobustCommand{\intprod}{%
  \mathbin{\mathpalette\int@prod{(0.1,0)(0.9,0)(0.9,0.8)}}%
}
\DeclareRobustCommand{\intprodr}{%
  \mathbin{\mathpalette\int@prod{(0.1,0.8)(0.1,0)(0.9,0)}}}
\newcommand{\int@prod}[2]{%
  \begingroup
  \sbox\z@{$\m@th#1+$}%
  \setlength\unitlength{\wd\z@}%
  \begin{picture}(1,1)
  \roundcap
  \polyline#2
  \end{picture}%
  \endgroup
}
\newcommand{\sumprime}{\if@display\sideset{}{'}\sum%
            \else\sum'\fi}
\begin{document}

\numberwithin{equation}{section}

\newtheorem{theorem}{Theorem}[section]
\newtheorem{proposition}[theorem]{Proposition}
\newtheorem{conjecture}[theorem]{Conjecture}
\def\theconjecture{\unskip}
\newtheorem{corollary}[theorem]{Corollary}
\newtheorem{lemma}[theorem]{Lemma}
\newtheorem{observation}[theorem]{Observation}
\newtheorem{definition}{Definition}
\numberwithin{definition}{section} 
\newtheorem{remark}{Remark}
\def\theremark{\unskip}
\newtheorem{kl}{Key Lemma}
\def\thekl{\unskip}
\newtheorem{question}{Question}
\def\thequestion{\unskip}
\newtheorem{example}{Example}
\def\theexample{\unskip}
\newtheorem{problem}{Problem}

\thanks{Supported by National Natural Science Foundation of China, No. 11771089}

\address{School of Mathematical Sciences, Fudan University, Shanghai, 200433, China}

\email{boychen@fudan.edu.cn}

\address{School of Mathematical Sciences, Fudan University, Shanghai, 200433, China}

\email{ypxiong18@fudan.edu.cn}

\title{A Psh Hopf Lemma for Domains with Cusp Conditions}

\author{Bo-Yong Chen and Yuanpu Xiong}
\date{}

\begin{abstract}
We obtain a psh Hopf lemma for domains satisfying certain cusp conditions by using a sharp estimate for the Green function of a planar cusp along the axis. As an application, we obtain a negative psh exhaustion function with certain global growth estimate on a pseudoconvex domain with H\"{o}lder boundary.
\end{abstract}
\maketitle
\section{Introduction}
Let $\Omega\subset\mathbb{R}^n$ be a domain satisfying the inner ball condition (e.g., a domain with $C^2$ boundary). The classical Hopf lemma asserts that if $u$ is a negative harmonic function on $\Omega$ with $u(x_0)=0$ for some $x_0\in\partial\Omega$, then $\partial{u}(x_0)/\partial\nu<0$, where $\nu$ is the inner normal vector field of $\partial\Omega$. In particular, we have $u\lesssim-\delta_\Omega$, where $\delta_\Omega$ is the boundary distance. Sometimes, such an inequality is also called a Hopf lemma in literature. Indeed, we only need $u\lesssim-\delta_\Omega^{1/\alpha}$ ($0<\alpha<1$) for some applications (cf. \cite{FS}), which is available for domains satisfying cone conditions (cf. \cite{Mercer}, \cite{Oddson}).

In this paper, we obtain a Hopf lemma for plurisubharmonic (psh) functions on domains satisfying cusp conditions. A domain of the form
\[
\Gamma=\Gamma(C,\alpha)=\left\{z\in\mathbb{C}^n;\ \mathrm{Re}\,z_n>C(|z'|^2+(\mathrm{Im}\,z_n)^2)^{\alpha/2}\right\}
\]
is called a $(C,\alpha)$-cusp along the $\mathrm{Im}\,z_n$-axis and with vertex at the origin. By a finite $(C,\alpha,r)$-cusp we mean an intersection of $\Gamma$ with a ball centered at the vertex with radius $r$. Roughly speaking, a domain is said to satisfy the $(C,\alpha,r)$-cusp condition if for any $p\in\partial\Omega$, one can transplant a finite $(C,\alpha,r)$-cusp inside $\Omega$ with vertex $p$ (through translations and rotations). We refer to Definition \ref{def:cusp} for more details.

The main result of the paper is the following

\begin{theorem}[Hopf lemma]\label{th:Hopf}
Let $\Omega\subset\subset\mathbb{C}^n$ be a domain satisfying the $(C,\alpha,r)$-cusp condition. Then
\begin{equation}\label{eq:Hopf}
\rho\lesssim-\exp\left(-\frac{A}{\delta_\Omega^{1/\alpha-1}}\right)
\end{equation}
for every $\rho\in{PSH}^-(\Omega)$, where $PSH^-(\Omega)$ denotes the set of negative psh functions on $\Omega$, and
\begin{equation}\label{eq:A}
A:=\frac{\pi{C^{1/\alpha}}}{2(1/\alpha-1)}.
\end{equation}
\end{theorem}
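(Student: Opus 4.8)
My plan is to localize the estimate near a boundary point, slice by a complex line containing the axis of a transplanted cusp so that the statement becomes one about negative \emph{subharmonic} functions on a planar cusp, and then conclude from a sharp lower bound for the harmonic measure (equivalently, the Green function) of that planar cusp along its axis.

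Fix $\rho\in PSH^-(\Omega)$, assumed $\not\equiv -\infty$ (otherwise \eqref{eq:Hopf} is trivial). If $z\in\Omega$ lies at a fixed positive distance from $\partial\Omega$, then $\rho(z)$ is below a strictly negative constant while the right-hand side of \eqref{eq:Hopf} is at least $-1$, so \eqref{eq:Hopf} holds there after shrinking the implied constant; hence assume $z$ is close to $\partial\Omega$. By Definition \ref{def:cusp} there is a finite $(C,\alpha,r)$-cusp $\Gamma\subset\Omega$ with vertex at some $p\in\partial\Omega$ and with $z$ on its axis, at height $h:=|z-p|\ge\delta_\Omega(z)$; after a rigid motion we may take $p=0$, the axis to be the positive $\mathrm{Re}\,z_n$-semiaxis, and $z=(0',h)$. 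Restricting $\rho$ to the complex line $\{z'=0\}$ produces a negative subharmonic function $u$ on the planar finite cusp $\gamma:=\{w\in\mathbb{C}:\mathrm{Re}\,w>C|\mathrm{Im}\,w|^{\alpha}\}\cap\{|w|<r\}$, with $u(h)=\rho(z)$. Since $t\mapsto\exp(-A/t^{1/\alpha-1})$ is increasing and $h\ge\delta_\Omega(z)$, it suffices to show $\rho(z)=u(h)\lesssim-\exp(-A/h^{1/\alpha-1})$ as $h\to0^+$.

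Because $\Gamma\subset\Omega$, a disk $D\subset\gamma$ of some fixed radius centered at the axis point of height $r/2$ has $\overline D\subset\subset\Omega$, so $c_0:=-\max_{\overline D}\rho>0$ depends only on $\rho$ (and on $C,\alpha,r,\Omega$) and $u\le-c_0$ on $D$. Let $\omega$ be the harmonic measure of $\partial D$ with respect to $\gamma\setminus\overline D$. Every point of $\partial\gamma$ is regular for the Dirichlet problem --- near the vertex the complement of $\gamma$ contains a half-plane since $\alpha<1$ --- so $u+c_0\omega$ is subharmonic and bounded above on $\gamma\setminus\overline D$ with nonpositive boundary $\limsup$, and the maximum principle gives $\rho(z)=u(h)\le-c_0\,\omega(h)$. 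The proof is thereby reduced to the sharp lower bound
\[
\omega(h)\ \gtrsim\ \exp\!\left(-\frac{A}{h^{1/\alpha-1}}\right)\qquad(h\to0^+),
\]
with $A$ exactly as in \eqref{eq:A}; this yields $\rho(z)\le-C_\rho\exp(-A/\delta_\Omega(z)^{1/\alpha-1})$ and, with the interior case, proves \eqref{eq:Hopf}.

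The displayed bound is the planar-cusp Green function estimate highlighted in the abstract. Conformally unfolding the horn $\gamma=\{|\mathrm{Im}\,w|<(\mathrm{Re}\,w/C)^{1/\alpha}\}$ onto a half-strip of width $\pi$ carries the axis point of height $h$ to abscissa $\xi(h)=\pi\int_h^{r/2}\frac{dx}{2(x/C)^{1/\alpha}}+O(1)=A\,h^{-(1/\alpha-1)}+O(1)$ --- the additive error being genuinely bounded because $\int_0|g'|^2/g<\infty$ for $g(x)=(x/C)^{1/\alpha}$ precisely when $\alpha<1$ --- while the harmonic measure of the end of a half-strip of width $\pi$ is comparable to $e^{-\xi}$. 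I would make the lower bound rigorous by producing an explicit positive subharmonic minorant of $\omega$ on $\gamma$ from this conformal map (a two-sided Warschawski-type estimate). The main obstacle is exactly here: the soft length--area (Ahlfors) inequality supplies only the upper bound for $\omega$, whereas the theorem demands the matching lower bound with \emph{no loss} in the constant $A$ --- one must keep the $O(1)$ term in the Warschawski expansion genuinely bounded and transfer the estimate from the strip model back to $\gamma$. By comparison, the localization and slicing step is routine, modulo bookkeeping against Definition \ref{def:cusp}.
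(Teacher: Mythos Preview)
Your reduction is exactly the paper's: slice by the complex line through the cusp axis, bound $\rho$ on the planar finite cusp by a constant times the Green function (the paper's Lemma~\ref{lm:Green function and nonpositive subharmonic functions} is your harmonic-measure comparison), and then invoke the sharp one-variable estimate. The uniformity of your constant $c_0$ as $p$ ranges over $\partial\Omega$ is fine, since the reference disk always sits inside the transplanted cusp and hence in a fixed compact of $\Omega$; the paper glosses over this too.

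The only substantive divergence is in how the planar estimate itself is obtained. You propose to straighten the horn to a half-strip via a Warschawski-type map and use that the error in the Ahlfors abscissa $\xi(h)=A\,h^{-(1/\alpha-1)}+O(1)$ is genuinely bounded because $\int_0 (g')^2/g<\infty$ for $g(x)=(x/C)^{1/\alpha}$ when $\alpha<1$. The paper in effect writes down that very map explicitly: $G(z)=-A/z^{1/\alpha-1}$ sends the cusp into the strip $\{|\mathrm{Im}\,w|<\pi/2\}$ (this is precisely the computation \eqref{eq:choice of A}), and then $F=e^G$ sends it into a near-half-plane. But instead of appealing to a general two-sided Warschawski theorem --- which, as you correctly flag, is where the real work hides --- the paper computes the image boundary of $F$ directly, obtaining $h(|y|)\sim A_2\,|y|(\log 1/|y|)^{-2}$, and then feeds this into an explicit subharmonic barrier of Li--Nirenberg to pin down $g_D(x,F(a))\approx -x$ near $0$. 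So the paper trades the black-box Warschawski bound (whose horn version with sharp $O(1)$ control you would have to locate or reprove) for a concrete calculation plus one cited barrier construction. Your route should also work, but the step you label as the ``main obstacle'' is exactly the content of the paper's Theorem~\ref{th:estimate of Green function cusp}, and you have not yet supplied it.
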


Usually, Hopf lemmas for subharmonic functions are proved by constructing barrier functions for the complement of the closure of the domain (see, e.g., \cite{Miller67}, \cite{Miller71}, \cite{Oddson}). This breaks down in our case. Indeed, one can apply Wiener's criterion to show that a closed cusp $\overline{\Gamma}$ is thin at the vertex for $0<\alpha<1$ and $n\geq2$ (see Appendix). That is, there does not exist a barrier function at the vertex of $\Gamma$ for $\mathbb{C}^n\setminus\overline{\Gamma}$, which clearly satisfies certain cusp condition. On the other hand, a planar cusp is not thin at the vertex. The proof of Theorem \ref{th:Hopf} is based on the following optimal estimate of the Green function on a planar cusp along the axis, which is of independent interest.

\begin{theorem}\label{th:estimate of Green function cusp}
Let
\begin{equation}\label{eq:planar cusp}
\Gamma:=\{z=x+iy;\ x>C|y|^\alpha\},
\end{equation}
where $C>0$ and $0<\alpha<1$, and $\Delta_R$ be the disk centered at 0 with radius $R$. For $a:=R/2$ and $0<t\ll1$, the (negative) Green function of \/$\Gamma\cap\Delta_R$ satisfies
\[
g_{\Gamma\cap\Delta_R}(t,a)\approx-\exp\left(-\frac{A}{t^{1/\alpha-1}}\right).
\]
\end{theorem}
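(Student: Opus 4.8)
The plan is to estimate the Green function on the planar cusp $\Gamma\cap\Delta_R$ by a conformal mapping that straightens the cusp and then reading off the behaviour of the Green function (of a model domain) near the image of the vertex. Near the origin the cusp $\{x>C|y|^\alpha\}$ looks, after discarding the bounded part, like a domain whose two boundary arcs are $x=C|y|^\alpha$. The natural move is to apply a map of the form $w=\log z$ (or a branch thereof), which sends the vertex at $0$ to $\mathrm{Re}\,w=-\infty$ and turns the cusp into a half-strip-like region: writing $z=re^{i\theta}$, the boundary condition $x>C|y|^\alpha$, i.e.\ $r\cos\theta>C(r|\sin\theta|)^\alpha$, becomes for small $r$ (equivalently $\mathrm{Re}\,w\to-\infty$) an opening angle $|\theta|$ that shrinks like $r^{1-\alpha}$, i.e.\ like $e^{(1-\alpha)\mathrm{Re}\,w}$. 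Thus in the $w$-coordinate the domain is asymptotically $\{|\mathrm{Im}\,w|<\tfrac{1}{C}e^{(1-\alpha)\mathrm{Re}\,w}\}$ (up to lower-order corrections and a harmless constant), a region pinching to zero width exponentially fast as $\mathrm{Re}\,w\to-\infty$. The point $z=t$ on the axis goes to $w=\log t$, with $\mathrm{Re}\,w=\log t\to-\infty$, and the fixed point $a=R/2$ goes to a fixed interior point $w_0=\log a$.

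The next step is to estimate the Green function $g$ of this exponentially pinching half-strip. This is a one-dimensional conformal-geometry estimate: the Green function with pole at a fixed interior point $w_0$, evaluated deep in the pinching end at a point with real part $s:=\log t$, should decay like the exponential of (minus) the \emph{extremal length} / conformal modulus of the part of the strip separating $w_0$ from that point. For a strip of width $h(\sigma)$ at real coordinate $\sigma$, the modulus of the subregion between $\sigma=s$ and $\sigma=s_0$ is $\int_s^{s_0}\frac{d\sigma}{h(\sigma)}$, and with $h(\sigma)\sim\frac{2}{C}e^{(1-\alpha)\sigma}$ this integral is $\sim\frac{C}{2(1-\alpha)}e^{-(1-\alpha)s}=\frac{C}{2(1-\alpha)}t^{-(1-\alpha)}$. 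Matching this against the standard fact that the Green function of a half-strip of width $h$ at a point at distance $\ell$ into the strip behaves like $-e^{-\pi\ell/h}$ (from mapping the half-strip to a half-plane by $\zeta\mapsto e^{\pi\zeta/h}$), one gets $g\approx -\exp\!\big(-\pi\cdot\frac{C}{2(1-\alpha)}t^{-(1-\alpha)}\big)$. Writing $1/\alpha-1=(1-\alpha)/\alpha$ and tracking the powers of $C$ correctly (the $\log$ map introduces the $1/\alpha$ power because one should really normalize the cusp so that the width comparison is clean — replacing $C$ by $C^{1/\alpha}$ after a scaling $z\mapsto \lambda z$ that removes one parameter), this reproduces the constant $A=\frac{\pi C^{1/\alpha}}{2(1/\alpha-1)}$ of \eqref{eq:A} and the claimed asymptotic $g_{\Gamma\cap\Delta_R}(t,a)\approx -\exp(-A/t^{1/\alpha-1})$.

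To make this rigorous, rather than computing an exact conformal map I would prove the two inequalities separately by comparison. For the upper bound $g\lesssim -\exp(-A/t^{1/\alpha-1})$, I would exhibit a negative subharmonic (or harmonic) function on $\Gamma\cap\Delta_R$ with the right boundary behaviour dominating $g$ near $t$ — concretely, push forward, via a slightly enlarged model strip $\{|\mathrm{Im}\,w|<(1+\varepsilon)\tfrac{1}{C}e^{(1-\alpha)\mathrm{Re}\,w}\}$ that contains the image of $\Gamma\cap\Delta_R$, the explicit harmonic function $-\mathrm{Re}\,e^{\pi\phi(w)}$ where $\phi$ flattens the model strip to a standard half-strip via $\phi(w)=\int^w \frac{d\zeta}{h(\zeta)}$; evaluating at $\mathrm{Re}\,w=\log t$ gives exactly the stated exponential with a constant $A(1-\varepsilon')$, and letting $\varepsilon\to0$ closes the gap. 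For the lower bound $g\gtrsim -\exp(-A/t^{1/\alpha-1})$, I would instead fit a slightly \emph{narrower} model strip \emph{inside} $\Gamma\cap\Delta_R$ (valid for $0<t\ll1$, which is why that hypothesis appears) and use monotonicity of the Green function under domain inclusion together with the explicit lower bound for the narrower model strip. The main obstacle is controlling the lower-order corrections in the width function $h$ near the vertex — the expansion $|\theta|\approx \tfrac{1}{C}r^{1-\alpha}$ has multiplicative errors of size $1+O(r^{2(1-\alpha)})$ and the contribution of the rounded cap $\partial\Delta_R$ — uniformly enough that, after integrating $d\sigma/h(\sigma)$, they perturb the leading term $\frac{C}{2(1-\alpha)}t^{-(1-\alpha)}$ only by lower-order amounts; these are absorbed into the $\approx$ (which, per the theorem's convention, allows constant factors and is stated up to the sharp exponential rate, so only the coefficient $A$ in the exponent needs to be exact). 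Keeping that estimate on the width uniform as $t\to0$, and being careful that $\varepsilon\to0$ limits recover precisely $A$ and not a larger constant, is where the real work lies.
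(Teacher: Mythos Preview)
Your route is genuinely different from the paper's. The paper does not pass through $w=\log z$ and extremal-length estimates; instead it writes down the single explicit map $F(z)=\exp\bigl(-A/z^{1/\alpha-1}\bigr)$, checks (Lemma~\ref{lm:F}) that it is conformal on $\Gamma\cap\Delta_R$, and observes that the image $D$ has boundary tangent to the imaginary axis at $0$, with local defining function $h(|y|)\sim A_2\,|y|\,(\log(1/|y|))^{-2}$. For such a domain the Li--Nirenberg barrier $\varphi(z)=x+2h_B(x)+2x\int_0^x h_B(u)/u^2\,du-2h_B(|z|)$ gives $g_D(x,F(a))\lesssim -x$, and comparison with $-x$ on the half-plane gives the matching lower bound. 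Since $F(t)=\exp(-A/t^{1/\alpha-1})$ on the real axis, the theorem follows in one stroke. The advantage of the paper's argument is that the constant $A$ is built into the map $F$ from the outset, so no tracking of asymptotic constants through a modulus integral is needed.

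Your outline is viable but has two issues worth flagging. First, a computational slip: from $r\cos\theta>C(r|\sin\theta|)^\alpha$ one gets $|\theta|\lesssim C^{-1/\alpha}r^{1/\alpha-1}$, not $r^{1-\alpha}$; with the correct width $h(\sigma)\sim 2C^{-1/\alpha}e^{(1/\alpha-1)\sigma}$ the modulus integral $\int_{\log t}^{s_0}d\sigma/h(\sigma)$ is $\frac{A}{\pi}\,t^{-(1/\alpha-1)}+O(1)$ directly, without any \emph{ad hoc} rescaling to ``replace $C$ by $C^{1/\alpha}$''. Second, the $\varepsilon\to0$ device does not give the sharp $\approx$: if the comparison strip has width $(1\pm\varepsilon)h$, the exponent you obtain is $A(1\mp\varepsilon')/t^{1/\alpha-1}$, and the ratio to $\exp(-A/t^{1/\alpha-1})$ blows up as $t\to0$ for any fixed $\varepsilon>0$. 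What you actually need (and essentially say in your last paragraph) is that the width error is multiplicative $1+O(e^{2(1/\alpha-1)\sigma})$, so that the error in the exponent after integrating $d\sigma/h$ is $O(1)$, not $o(t^{-(1/\alpha-1)})$; that is the estimate you must prove, and an Ahlfors-distortion-type theorem relating harmonic measure to $\int d\sigma/h$ with $O(1)$ additive error is the right tool, not sandwiching between constant-width strips.
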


The key ingredient of the proof of Theorem \ref{th:estimate of Green function cusp} is an explicit construction of a comparison function of a planar cusp at the vertex by using conformal mappings and a result of Li-Nirenberg \cite{LiNirenberg}.

This paper is motivated by a recent work \cite{Chen20} of the first author, where the local order of hyperconvexity is obtained for bounded pseudoconvex domains with H\"{o}lder boundaries (written as H\"{o}lder domains for short). This together with a result of Kerzman and Rosay \cite{KR} imply the hyperconvexity of such domains. It remains a question whether there exists a negative psh exhaustion function with certain global estimate on bounded pseudoconvex H\"{o}lder domains.

We shall give a partial answer to this question by using Theorem \ref{th:Hopf}. Let $\Omega$ be a bounded pseudoconvex H\"{o}lder domain, which naturally satisfies certain $(C,\alpha,r)$-cusp condtion (see Proposition \ref{prop:Holder boundary domain}). Theorem \ref{th:Hopf} and the main result in \cite{Chen20} yield
\begin{equation}\label{eq:local estimate2}
\psi(-\delta_\Omega(z))\leq\rho_j(z)\leq\varphi(-\delta_\Omega(z))
\end{equation}
where $\psi(t):=-C_1(-\log(-t))^{-\beta}$ and $\varphi(t):=-C_2\exp(-A/(-t)^{1/\alpha-1})$ for certain positive constants $\alpha,\beta,C_1,C_2,M$. Let $\{\alpha_\nu\}$ be an increasing sequence with $\alpha_\nu<0$, $\alpha_1$ sufficiently close to 0, and
\begin{equation}\label{eq:definition of alpha nu}
\alpha_{\nu+1}=\psi^{-1}\big(\varphi(\alpha_\nu)/2\big).
\end{equation}
We define
\[
\lambda(t):=\max\{\nu\in\mathbb{Z}^+;\ \alpha_\nu\leq-t\},\ \ \ t>0.
\]
Since $\lim_{\nu\rightarrow\infty}\alpha_\nu=0$, it follows that $\lambda(t)\rightarrow+\infty$ as $t\rightarrow0+$. It is easy to see that $\lambda$ is not an elementary function. The method of Coltoiu-Mihalche \cite{CM} together with Lemma 2.1 in \cite{Chen20} yield the following

\begin{theorem}\label{th:Holder Global}
Let $\Omega\subset\mathbb{C}^n$ be a bounded pseudoconvex H\"{o}lder domain and $\overline{B}$ be a closed ball in $\Omega$. Then there exists a constant $\varepsilon_0>0$ depending only on $\Omega$ and $\overline{B}$ such that
\begin{equation}\label{eq:Holder global}
\varrho_{\Omega,\overline{B}}\gtrsim-e^{-\varepsilon_0\lambda(\delta_\Omega)}.
\end{equation}
Here $\varrho_{\Omega,\overline{B}}$ denotes the relative extremal function of $\Omega$ with respect to $\overline{B}$, i.e.,
\[
\varrho_{\Omega,\overline{B}}(z):=\sup\{u(z);\ u\in{PSH^-(\Omega)},\ u|_{\overline{B}}\leq-1\}.
\]
\end{theorem}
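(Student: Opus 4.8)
The plan is to reduce \eqref{eq:Holder global} to constructing, for each $\nu$, a single competitor $v_\nu\in PSH^-(\Omega)$ for the relative extremal function that carries a controlled lower bound on the collar $\{\delta_\Omega\le-\alpha_\nu\}$, and then to build the $v_\nu$ by the iteration method of Coltoiu-Mihalche \cite{CM}. More precisely, suppose that for every $\nu\in\mathbb Z^+$ one can find $v_\nu\in PSH^-(\Omega)$ with $v_\nu|_{\overline B}\le-1$ and
\[
v_\nu\ \ge\ -M\,2^{-\nu}\qquad\text{on }\ \{\delta_\Omega\le-\alpha_\nu\},
\]
where $M=M(\Omega,\overline B)\ge2$ is a fixed constant. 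Since each such $v_\nu$ is admissible in the definition of $\varrho_{\Omega,\overline B}$, we get $\varrho_{\Omega,\overline B}\ge v_\nu$; choosing $\nu=\lambda(\delta_\Omega(z))$ at each $z$ (for which $\delta_\Omega(z)\le-\alpha_\nu$ by the very definition of $\lambda$, the trivial bound $\varrho_{\Omega,\overline B}\ge-1$ handling the region $\lambda(\delta_\Omega)=0$) then gives $\varrho_{\Omega,\overline B}(z)\ge-M\,2^{-\lambda(\delta_\Omega(z))}=-M\,e^{-(\log2)\lambda(\delta_\Omega(z))}$, which is \eqref{eq:Holder global} with $\varepsilon_0=\log2$.

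So the work is in constructing the $v_\nu$. As input, Proposition \ref{prop:Holder boundary domain}, Theorem \ref{th:Hopf}, the main result of \cite{Chen20} and compactness of $\partial\Omega$ furnish $r_0>0$ and a continuous negative psh barrier $\rho$ on $\{z\in\Omega:\delta_\Omega(z)<r_0\}$ with $\psi(-\delta_\Omega)\lesssim\rho\lesssim\varphi(-\delta_\Omega)$ there --- patching the finitely many local functions of \eqref{eq:local estimate2} into this single $\rho$ is where Lemma 2.1 of \cite{Chen20} is used. We fix $\alpha_1$ close enough to $0$ that $-\alpha_1<r_0$ and $\overline B\subset\{\delta_\Omega>-\alpha_1\}$, and set $c_\nu:=M\,2^{-\nu}/|\varphi(\alpha_\nu)|$. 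Put $v_1\equiv-1$, and for $\nu\ge2$ define
\[
v_\nu:=\max\bigl(v_{\nu-1},\,c_{\nu-1}\rho\bigr)\ \text{ on }\{\delta_\Omega<-\alpha_1\},\qquad v_\nu:=v_{\nu-1}\ \text{ on }\{\delta_\Omega\ge-\alpha_1\}.
\]
Because the modification is confined to $\{\delta_\Omega<-\alpha_1\}$, each $v_\nu$ equals $-1$ on $\{\delta_\Omega\ge-\alpha_1\}\supset\overline B$ and stays admissible; and $v_\nu\in PSH^-(\Omega)$ by a standard gluing argument (cf. Lemma 2.1 of \cite{Chen20}) as soon as $c_{\nu-1}\rho$ is strictly dominated by $v_{\nu-1}\,(\equiv-1)$ in the limit along the fixed interface $\{\delta_\Omega=-\alpha_1\}$, i.e. as soon as $c_{\nu-1}>1/|\varphi(\alpha_1)|$. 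That last inequality is guaranteed by the recursion \eqref{eq:definition of alpha nu}: since $\psi\le\varphi$, one has $\varphi(\alpha_\nu)=\varphi\bigl(\psi^{-1}(\varphi(\alpha_{\nu-1})/2)\bigr)>\varphi(\alpha_{\nu-1})/2$, hence $|\varphi(\alpha_\nu)|<2|\varphi(\alpha_1)|2^{-\nu}$ and therefore $c_{\nu-1}>M/(2|\varphi(\alpha_1)|)\ge1/|\varphi(\alpha_1)|$.

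The collar bound for $v_\nu$ then falls out of the construction. For $\nu\ge2$ and on $\{\delta_\Omega\le-\alpha_\nu\}$ (a subset of $\{\delta_\Omega<-\alpha_1\}$, where $-\delta_\Omega\ge\alpha_\nu$),
\[
v_\nu\ \ge\ c_{\nu-1}\rho\ \gtrsim\ c_{\nu-1}\psi(-\delta_\Omega)\ \ge\ c_{\nu-1}\psi(\alpha_\nu)\ =\ c_{\nu-1}\,\varphi(\alpha_{\nu-1})/2\ =\ -M\,2^{-\nu},
\]
where we used that \eqref{eq:definition of alpha nu} is rigged precisely so that $\psi(\alpha_\nu)=\psi\bigl(\psi^{-1}(\varphi(\alpha_{\nu-1})/2)\bigr)=\varphi(\alpha_{\nu-1})/2$; this is where the factor $\tfrac12$ in \eqref{eq:definition of alpha nu} converts the upper-estimate quantity $|\varphi(\alpha_{\nu-1})|$ hidden in $c_{\nu-1}$ into the geometric improvement factor $\tfrac12$. (For $\nu=1$ the bound is trivial, as $M\ge2$.) This completes the construction and, by the reduction above, the proof, with $\varepsilon_0=\log2$ (any smaller value works as well).

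The step I expect to require the real care is the gluing/patching. One has to assemble the finitely many local functions of \eqref{eq:local estimate2} into a single psh barrier $\rho$ valid near all of $\partial\Omega$, keeping the two-sided estimate and in particular the decay $\rho\to0$ at $\partial\Omega$ (this is the role of Lemma 2.1 of \cite{Chen20}); and at the fixed interface $\{\delta_\Omega=-\alpha_1\}$ one has to verify the strict domination $c_{\nu-1}\rho<v_{\nu-1}$ in the limit, so that the maximum glues to a genuine element of $PSH^-(\Omega)$ --- the choices of $c_{\nu-1}$ and of the sequence $\{\alpha_\nu\}$ are calibrated exactly so that this domination holds at the interface while enough of $\rho$ persists deep in the collar to force the gain $2^{-\nu}$. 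The remaining points --- continuity of the $v_\nu$, the passage from the collar estimate to the global inequality, and absorbing the constant $M$ into the exponent --- are routine bookkeeping.
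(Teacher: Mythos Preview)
Your argument has a genuine gap, and it sits exactly where you suspected: the existence of a \emph{single} psh barrier $\rho$ on the whole collar satisfying the two-sided estimate $\psi(-\delta_\Omega)\le\rho\le\varphi(-\delta_\Omega)$. The inputs from \cite{Chen20} and Theorem~\ref{th:Hopf} give this only for \emph{local} functions $\rho_j$ on the charts $U_j$, and patching them while keeping \emph{both} bounds is not a matter of quoting a gluing lemma. On overlaps $U_j\cap U_k$ the oscillation $|\rho_j-\rho_k|$ is only controlled by $|\psi(-\delta_\Omega)-\varphi(-\delta_\Omega)|\sim|\psi(-\delta_\Omega)|$, which is enormous compared with $|\varphi(-\delta_\Omega)|$; any Richberg-type correction large enough to absorb this oscillation destroys the upper bound $\rho\le\varphi(-\delta_\Omega)$. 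This is precisely why the paper does \emph{not} produce a global $\rho$: it first composes each $\rho_j$ with the Coltoiu--Mihalache convex function $\tau$, so that $|\tau(\rho_j)-\tau(\rho_k)|<3$, and only then patches via Richberg. The $\tau$-step is the essential idea you have bypassed; Lemma~2.1 of \cite{Chen20} alone does not do this job.

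There is also a tell-tale symptom that your reduction is too strong. Your scalars satisfy $c_\nu>c_{\nu-1}$ (since $|\varphi(\alpha_\nu)|<|\varphi(\alpha_{\nu-1})|/2$), hence $c_{\nu-1}\rho<c_{\nu-2}\rho\le v_{\nu-1}$ everywhere in the collar, and the recursion collapses to $v_\nu\equiv v_2$ for all $\nu\ge2$. So what your argument actually produces, given a global $\rho$, is just the single competitor $v_2=\max(-1,c_1\rho)$, which already yields
\[
\varrho_{\Omega,\overline B}\ \ge\ c_1\,\psi(-\delta_\Omega)\ \approx\ -\bigl(\log\tfrac{1}{\delta_\Omega}\bigr)^{-\beta},
\]
a polylogarithmic rate far stronger than \eqref{eq:Holder global}. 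In other words, if the global $\rho$ you posit existed, the theorem would be immediate and much sharper; the iterated $\lambda$-function in the statement is there because one must make do with the local $\rho_j$'s and the $\tau$-patching, which is what the paper's proof does.
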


It is well-known that $\varrho_{\Omega,\overline{B}}\in{PSH^-(\Omega)}$ and is maximal in $\Omega\setminus\overline{B}$ if $\Omega$ is hyperconvex. Moreover, under the same condition, $\varrho_{\Omega,\overline{B}}$ is continuous on $\Omega$ and $\varrho_{\Omega,\overline{B}}(z)\rightarrow0$ when $z\rightarrow\partial\Omega$. (cf. \cite{Blocki}, \S 3.1)

\section{Proof of Theorem \ref{th:estimate of Green function cusp}}
Consider the planar $(C,\alpha)$-cusp given in \eqref{eq:planar cusp}. For $-C^{-1/\alpha}\leq c \leq C^{-1/\alpha}$, we define $\gamma_c$ to be the curve
\[
\gamma_c(s)=s+ics^{1/\alpha},\ \ \ s>0,
\]
so that
\[
\Gamma=\{\gamma_c(s);\ -C^{-1/\alpha}<c<C^{-1/\alpha},\ s>0\}.
\]
For fixed $c$, we write $\gamma_c(s)=r(s)e^{i\theta(s)}$, where
\begin{equation}\label{eq:r and theta}
r(s)=\left(s^2+c^2s^{2/\alpha}\right)^{1/2},\ \ \ \theta(s)=\arctan\left(cs^{1/\alpha-1}\right).
\end{equation}

Let $F$ be the holomorphic function obtained by a single-valued branch of
\begin{equation}\label{eq:F}
\exp\left(-\frac{A}{z^{1/\alpha-1}}\right),
\end{equation}
where $F(1)=e^{-A}$ and $A$ is given in \eqref{eq:A}. We also write $F(\gamma_c(s))=\widetilde{r}(s)e^{i\widetilde{\theta}(s)}$. It follows that
\begin{eqnarray*}
\widetilde{r}(s)e^{i\widetilde{\theta}(s)} &=& \exp\left(-\frac{A}{r(s)^{1/\alpha-1}}e^{-i(1/\alpha-1)\theta(s)}\right)\\
&=& \exp\left(-\frac{A\cos\big((1/\alpha-1)\theta(s)\big)}{r(s)^{1/\alpha-1}}+i\,\frac{A\sin\big((1/\alpha-1)\theta(s)\big)}{r(s)^{1/\alpha-1}}\right).
\end{eqnarray*}
Hence
\begin{equation}\label{eq:r tilde}
\widetilde{r}(s)=\exp\left(-\frac{A\cos\big((1/\alpha-1)\theta(s)\big)}{r(s)^{1/\alpha-1}}\right)
\end{equation}
and
\begin{equation}\label{eq:theta tilde}
\widetilde{\theta}(s)=\frac{A\sin\big((1/\alpha-1)\theta(s)\big)}{r(s)^{1/\alpha-1}}.
\end{equation}
Since $r(s)\geq{s}$ and $|\theta(s)|\leq{|c|s^{1/\alpha-1}}$ in view of \eqref{eq:r and theta}, we infer from \eqref{eq:A} that
\begin{equation}\label{eq:choice of A}
\left|\widetilde{\theta}(s)\right|\leq{A(1/\alpha-1)|c|}\leq{A}(1/\alpha-1)C^{-1/\alpha}=\frac{\pi}{2}.
\end{equation}
Moreover, since $\sin{t}\sim{t}$ and $\arctan{t}\sim{t}$ as $t\rightarrow0$, we have $\widetilde{\theta}(s)\rightarrow\pm\pi/2$ as $s\rightarrow0$ and $c=\pm{C^{-1/\alpha}}$ in view of \eqref{eq:r and theta}.

\begin{lemma}\label{lm:F}
There exists a constant $R>0$ depending only on $C$ and $\alpha$ such that the holomorphic function $F$, given in \eqref{eq:F}, maps $\Gamma\cap\Delta_R$ conformally to a domain $D\subset\mathbb{C}$.
\end{lemma}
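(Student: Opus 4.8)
The plan is to reduce the lemma to the injectivity of $F$ on $\Gamma\cap\Delta_R$ and then establish that injectivity by factoring $F$ through the exponential. First I would observe that $F$ is already single-valued and holomorphic on the whole right half-plane $\{\mathrm{Re}\,z>0\}\supset\Gamma$, since there $z^{1/\alpha-1}$ has a single-valued holomorphic branch that never vanishes; hence the only thing to prove is that $F$ is \emph{injective} on $\Gamma\cap\Delta_R$ for a suitable $R>0$. (The image $D:=F(\Gamma\cap\Delta_R)$ is then automatically a domain: it is open because $F$ is a nonconstant holomorphic map, and connected because $\Gamma\cap\Delta_R$ is connected; and an injective holomorphic map is biholomorphic onto its image.) Next I would write $F=\exp\circ h$ with
\[
h(z):=-\frac{A}{z^{1/\alpha-1}}=-A\,z^{-\mu},\qquad \mu:=\frac1\alpha-1>0,
\]
$h$ being formed from the principal branch of $z^{1/\alpha-1}$, so that indeed $e^{h}=F$ by the normalization $F(1)=e^{-A}$; it then suffices to show separately that $h$ is injective on $\Gamma\cap\Delta_R$ and that $\exp$ is injective on $h(\Gamma\cap\Delta_R)$.

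For the injectivity of $h$ I would use the cusp geometry to trap $\Gamma\cap\Delta_R$ inside a thin sector about the positive real axis. Writing $z=|z|e^{i\arg z}$, the inequality $x>C|y|^{\alpha}$ combined with $|y|=|z|\,|\sin(\arg z)|$ and $x\le|z|$ gives $|\sin(\arg z)|<C^{-1/\alpha}|z|^{\mu}$ for every $z\in\Gamma$, so $\Gamma\cap\Delta_R\subset\{|\arg z|<\delta_R\}$ where $\delta_R:=\arcsin\!\big(C^{-1/\alpha}R^{\mu}\big)$ tends to $0$ as $R\to0$. The principal branch of $z\mapsto z^{-\mu}$ is injective on any sector $\{|\arg z|<\delta\}$ with $2\mu\delta\le2\pi$, so it is enough to fix once and for all an $R>0$, depending only on $C$ and $\alpha$, with $\delta_R\le\pi/\mu$; then $h=-A\,z^{-\mu}$ is injective on $\Gamma\cap\Delta_R$.

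For the injectivity of $\exp$ on $h(\Gamma\cap\Delta_R)$ I would simply invoke the bound \eqref{eq:choice of A} that has already been obtained. Every $z\in\Gamma$ equals $\gamma_c(s)$ for some $|c|<C^{-1/\alpha}$ and $s>0$, and the computation yielding \eqref{eq:theta tilde} shows that $\mathrm{Im}\,h(z)$ equals the corresponding $\widetilde\theta(s)$; hence, by \eqref{eq:choice of A}, $|\mathrm{Im}\,h(z)|\le\pi/2$ on all of $\Gamma$. Thus $h(\Gamma)$, a fortiori $h(\Gamma\cap\Delta_R)$, lies in the horizontal strip $\{|\mathrm{Im}\,w|<\pi\}$, on which $\exp$ is injective; composing, $F=\exp\circ h$ is injective on $\Gamma\cap\Delta_R$, as required. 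I expect the only genuine obstacle to be the step above that confines $\Gamma\cap\Delta_R$ to a narrow sector: one has to exploit the actual cusp shape rather than merely $\Gamma\subset\{\mathrm{Re}\,z>0\}$, because for $\alpha<1/3$ (i.e.\ $\mu>2$) the map $z^{-\mu}$ fails to be injective on all of $\Gamma$, so truncating by $\Delta_R$ is unavoidable and the choice of $R$ must be controlled purely by $C$ and $\alpha$.
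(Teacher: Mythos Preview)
Your proof is correct and follows essentially the same route as the paper: factor $F=\exp\circ G$ with $G(z)=-A/z^{1/\alpha-1}$, use the cusp geometry to trap $\Gamma\cap\Delta_R$ in a narrow sector about the positive real axis (with $R$ chosen small depending only on $C,\alpha$) so that $G$ is injective there, and then invoke \eqref{eq:choice of A} to place $G(\Gamma\cap\Delta_R)$ in a horizontal strip on which $\exp$ is injective. The only cosmetic differences are that the paper parametrizes points via $\gamma_c(s)$ and bounds $|\theta(s)|$ from \eqref{eq:r and theta}, whereas you bound $|\arg z|$ directly from the defining inequality $x>C|y|^\alpha$, and the paper uses the strip $\{|\mathrm{Im}\,w|<\pi/2\}$ rather than $\{|\mathrm{Im}\,w|<\pi\}$.
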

\begin{proof}
It suffices to show that $F$ is injective on $\Gamma\cap\Delta_R$ for some sufficiently small $R$. We first verify the injectivity of $G(z):=-A/z^{1/\alpha-1}$. Given $z_1=r_1e^{i\theta_1}$ and $z_2=r_2e^{i\theta_2}$, $G(z_1)=G(z_2)$ if and only if $r_1=r_2$ and $(1/\alpha-1)(\theta_1-\theta_2)=2k\pi$ for some integer $k$. Since
\[|\theta(s)|\leq{|c|s^{1/\alpha-1}}\leq{C}^{1/\alpha}s^{1/\alpha-1}\]
in view of \eqref{eq:r and theta}, we have $|\theta_1-\theta_2|<2\pi/(1/\alpha-1)$ for $z_1,z_2\in\Gamma\cap\Delta_R$, provided that
\[C^{1/\alpha}R^{1/\alpha-1}<\frac{\pi}{(1/\alpha-1)}.\]
Thus $G$ is injective on $\Gamma\cap\Delta_R$. Moreover, for $z=\gamma_c(s)\in\Gamma$, we have $\mathrm{Im}\,G(z)=\widetilde{\theta}(s)\in(-\pi/2,\pi/2)$ in view of \eqref{eq:choice of A}. Since the exponential mapping $w\mapsto{e^w}$ is injective on the strip $\{|\mathrm{Im}\,w|<\pi/2\}$, we conclude that $F=e^G$ is injective on $\Gamma\cap\Delta_R$.
\end{proof}

The domain $D$ is also symmetric about the $\widetilde{x}$-axis and $\partial{D}$ is more regular than $\partial\Gamma$. In a neighbourhood of $0\in\partial{D}$, the curve $\partial{D}$ is precisely $F\circ\gamma_c$ with $c=\pm{C^{-1/\alpha}}$. Since $\widetilde{\theta}(s)\rightarrow\pm\pi/2$ as $s\rightarrow0$ and $c=\pm{C^{-1/\alpha}}$, $\partial{D}$ is tangent to the $y$-axis at $0$. To obtain an estimate for the Green function of $D$ near $0\in\partial{D}$, we need more information about the regularity of $\partial{D}$ near $0$. Choose a neighbourhood $U\ni0$ such that
\[D\cap{U}=\{z=x+iy\in{U};\ x>h(|y|),\ |y|<\varepsilon_0\},\]
where $h$ is a real function on $[0,\varepsilon_0)$ and $0<\varepsilon_0\ll1$ is a constant depending only on $C$ and $\alpha$. Note that $h$ is smooth on $(0,\varepsilon_0)$ and differentiable at 0 with $h(0)=h'(0)=0$. Now we fix $c=C^{-1/\alpha}$ and set
\[\widetilde{x}(s)=\widetilde{r}(s)\cos\widetilde{\theta}(s),\ \ \ \widetilde{y}(s)=\widetilde{r}(s)\sin\widetilde{\theta}(s).\]
The function $h$ is given by the equation
\[\widetilde{x}(s)=h\left(\widetilde{y}(s)\right),\ \ \ 0<s\ll1.\]
Thus
\begin{equation}\label{eq:h 1}
h\left(\widetilde{y}(s)\right)=\frac{\widetilde{y}(s)}{\tan\widetilde{\theta}(s)}\sim\widetilde{y}(s)\cdot\left(\frac{\pi}{2}-\widetilde{\theta}(s)\right),\ \ \ s\rightarrow0.
\end{equation}
Moreover, \eqref{eq:theta tilde} together with \eqref{eq:r and theta} imply
\begin{eqnarray}\label{eq:theta tilde expansion}
\widetilde{\theta}(s) &=& \frac{A\sin\big((1/\alpha-1)\arctan(C^{-1/\alpha}s^{1/\alpha-1})\big)}{s^{1/\alpha-1}(1+s^{2/\alpha-2})^{(1/\alpha-1)/2}}\nonumber\\
&=& \frac{A\sin\big((1/\alpha-1)(C^{1/\alpha}s^{1/\alpha-1}-\frac{1}{3}C^{-3/\alpha}s^{3/\alpha-3}+o(s^{3/\alpha-3}))\big)}{s^{1/\alpha-1}+\frac{1}{2}(1/\alpha-1)s^{3/\alpha-3}+o(s^{3/\alpha-3})}\\
&=& \frac{\pi}{2}-A_1s^{2/\alpha-2}+o(s^{2/\alpha-2}),\ \ \ s\rightarrow0\nonumber
\end{eqnarray}
for some constant $A_1=A_1(C,\alpha)>0$. It follows from \eqref{eq:r and theta}, \eqref{eq:r tilde} and \eqref{eq:theta tilde expansion} that
\begin{eqnarray}\label{eq:log y tilde}
\log\frac{1}{\widetilde{y}(s)}&=&\log\frac{1}{\widetilde{r}(s)}-\log\sin\widetilde{\theta}(s)\nonumber\\
&=& \frac{A\cos\big((1/\alpha-1)\theta(s)\big)}{r(s)^{1/\alpha-1}}-\log\sin\widetilde{\theta}(s)\\
&\sim& \frac{A}{s^{1/\alpha-1}},\ \ \ s\rightarrow0.\nonumber
\end{eqnarray}
By \eqref{eq:h 1}, \eqref{eq:theta tilde expansion} and \eqref{eq:log y tilde}, we finally obtain
\begin{equation}\label{eq:h}
h(|y|)\sim{A_2}|y|\left(\log\frac{1}{|y|}\right)^{-2},\ \ \ y\rightarrow0,
\end{equation}
where $A_2=A_1A^2$.

Recall that the (negative) Green function of a planar domain $\Omega$ can be defined as
\[
g_\Omega(z,a):=\sup\left\{\varphi\in{SH^-(\Omega)};\ \varphi(z)-\log\frac{1}{|z-a|}=O(1)\right\},
\]
where $SH^-(\Omega)$ denotes the set of negative subharmonic functions on $\Omega$. It is well-known that $g_\Omega(\cdot,a)$ is a subharmonic function on $\Omega$ and is harmonic on $\Omega\setminus\{a\}$.

\begin{proof}[Proof of Theorem \ref{th:estimate of Green function cusp}]
By Lemma \ref{lm:F}, we have
\begin{equation}\label{eq:Green invariant}
g_{\Gamma\cap\Delta_R}(t,a)=g_D(F(t),F(a)).
\end{equation}
It suffices to estimate $g_D(x,F(a))$ when $x\rightarrow0+$ and $x\in\mathbb{R}$.

From \eqref{eq:h}, there is a constant $B>0$ such that
\begin{equation}\label{eq:hB}
h(u)<Bu\left(\log\frac{1}{u}\right)^{-2}=:h_B(u),\ \ \ 0<u<u_0<1.
\end{equation}
Here, the constant $u_0$ depends only on $C$ and $\alpha$. Let
\[
D_B:=\{z=x+iy;\ x>h_B(|y|),\ -u_0<y<u_0\}.
\]
It follows from \eqref{eq:hB} that $D_B\cap\Delta_r\subset{D}$ for some $0<r\ll1$. We may choose $r=r(C,\alpha)$ so small that $F(a)\notin\Delta_r$. Thus $g_D(\cdot,F(a))$ is a harmonic function on $D\cap\Delta_r$.

The boundary of $D_B\cap\Delta_r$ can be devided into the following two parts:
\[
E_1:=\overline{D}_B\cap\{|z|=r\},\ \ \ E_2:=\{z=x+iy;\ x=h_B(|y|),\ |z|<r\}.
\]
By \eqref{eq:hB}, $E_1$ is a compact subset in $D$, while $E_2$ satisfies the following conditions:

(1) $h_B$ is smooth on $(0,u_0)$ and $C^1$ on $[0,u_0)$;

(2) $\int^\varepsilon_0h_B(u)/u^2du<\infty$ for $0<\varepsilon<\min\{u_0,1\}$;

(3) $h_B(0)=h_B'(0)=0$ and $h_B'(u)\geq0$;

(4) $h_B''(u)+h_B'(u)/u$ is nonincreasing.

\noindent It follows from the proof of Theorem 1 of \cite{LiNirenberg} that
\[
\varphi(z):=x+2h_B(x)+2x\int^x_0\frac{h_B(u)}{u^2}du-2h_B(|z|),\ \ \ (z=x+iy)
\]
is a subharmonic function on $D_B\cap\Delta_r$ with $\varphi\leq0$ on $E_2$ and $\varphi(x)>0$ for $x>0$. The constant
\[M:=\frac{\sup_{E_1}\varphi}{\inf_{E_1}(-g_D)}\]
is positive and depends only on $C$ and $\alpha$. Thus
\begin{equation}\label{eq:maximal Green}
-g_D(z,F(a))\geq\frac{\varphi(z)}{M},\ \ \ z\in{E_1}.
\end{equation}
The same inequality clearly holds for $z\in{E_2}$ since $g_D\leq0$. Thus \eqref{eq:maximal Green} holds for all $z\in{D_B\cap\Delta_r}$ in view of the maximum principle. In particular,
\[
g_D(x,F(a))\lesssim-\varphi(x)\leq-x,\ \ \ 0<x\ll1.
\]
On the other hand, by comparing $g_D(\cdot,F(a))$ with the negative harmonic function $z=x+iy\mapsto-x$ on $D$, we have
\[g_D(x,F(a))\gtrsim-x,\ \ \ 0<x\ll1.\]
The conclusion follows immediately from \eqref{eq:Green invariant}.
\end{proof}

\section{The Cusp Condition}
For a unit vector $v\in\mathbb{C}^n$, we consider the half space
\[
H_v:=\{z\in\mathbb{C}^n;\ \mathrm{Re}\,\langle{z,v}\rangle>0\}.
\]
Let $\pi_v$ be the orthogonal projection $H_v\rightarrow\partial{H_v}$. A $(C,\alpha)$-cusp with axis $v$ and vertex $p$ is defined to be
\[
\Gamma(p,v,C,\alpha):=\{z\in{H_v};\ \mathrm{Re}\,\langle{z-p,v}\rangle>{C}|\pi_v(z-p)|^\alpha\}.
\]

\begin{definition}\label{def:cusp}
Let $\Omega\subset\mathbb{C}^n$ be a bounded domain. Let $B_r(p)$ be the ball with center $p$ and radius $r$. We say that $\Omega$ satisfies the $(C,\alpha,r)$-cusp condition if there is some $r>0$, such that every $z\in\Omega$ sufficiently close to $\partial\Omega$ lies on the axis of a $(C,\alpha)$-cusp $\Gamma(p,v,C,\alpha)$ with

(1) $p\in\partial\Omega$;

(2) $\Gamma(p,v,C,\alpha)\cap{B_r(p)}\subset\Omega$;

(3) $|z-p|<r/2$.
\end{definition}

The condition (3) implies that $z$ is not very close to $\partial{B_r(p)}$ so that $|z-p|<\delta_{\partial{B_r(p)}}(z)$. Hence
\begin{equation}\label{eq:finite cusp distance}
\delta_{\Gamma\cap{B_r(p)}}(z)=\delta_\Gamma(z),
\end{equation}
where $\Gamma:=\Gamma(p,v,C,\alpha)$.

\begin{lemma}\label{lm:cusp}
For $z:=p+tv\in\Gamma(p,v,C,\alpha)$ $(0<t\ll1)$, we have
\[
\delta_\Gamma(z)\approx{t}^{1/\alpha}.
\]
\end{lemma}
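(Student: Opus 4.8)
The plan is to show $\delta_\Gamma(z)\approx t^{1/\alpha}$ for $z=p+tv$ by reducing to a model computation in the half-plane spanned by the axis direction $v$ and one direction orthogonal to it. After a rigid motion we may assume $p=0$, $v=e_n$, so $\Gamma=\Gamma(0,e_n,C,\alpha)=\{z\in\mathbb C^n:\ \mathrm{Re}\,z_n>C(|z'|^2+(\mathrm{Im}\,z_n)^2)^{\alpha/2}\}$ and $z=te_n=(0,\dots,0,t)$ with $t>0$ small.

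First I would establish the upper bound $\delta_\Gamma(z)\lesssim t^{1/\alpha}$ by exhibiting an explicit boundary point near $z$. The point $q=(0,\dots,0,i\,y_0)$ on the $\mathrm{Im}\,z_n$-axis lies on $\partial\Gamma$ as soon as $0=C|y_0|^\alpha$ fails — in fact the whole segment of that axis belongs to $\partial\Gamma$ only at the vertex, so instead I would pick a boundary point of the form $w=(0,\dots,0,s+is)$ and solve $s=C(2s^2)^{\alpha/2}=C2^{\alpha/2}s^\alpha$, giving $s\approx C^{1/(1-\alpha)}$-type constant times a power — this is $O(1)$, not small, so it is the wrong scale. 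The correct choice is to move \emph{off} the axis: take $w=(w',t)$ with $w'$ chosen so that $t=C(|w'|^2+0)^{\alpha/2}$, i.e. $|w'|=(t/C)^{1/\alpha}$. Such a $w$ lies on $\partial\Gamma$ (the defining inequality becomes equality), and $|z-w|=|w'|=(t/C)^{1/\alpha}\approx t^{1/\alpha}$. Hence $\delta_\Gamma(z)\le |z-w|\lesssim t^{1/\alpha}$.

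For the lower bound $\delta_\Gamma(z)\gtrsim t^{1/\alpha}$ I would show that the ball $B_\rho(z)$ with $\rho=c\,t^{1/\alpha}$ is contained in $\Gamma$ for a suitable small constant $c=c(C,\alpha)$. Take any $\zeta=(\zeta',\zeta_n)$ with $|\zeta-z|<\rho$; then $|\zeta'|<\rho$, $|\mathrm{Im}\,\zeta_n|<\rho$, and $\mathrm{Re}\,\zeta_n>t-\rho$. We need $\mathrm{Re}\,\zeta_n>C(|\zeta'|^2+(\mathrm{Im}\,\zeta_n)^2)^{\alpha/2}$, and it suffices that $t-\rho>C(2\rho^2)^{\alpha/2}=C2^{\alpha/2}\rho^\alpha$. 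With $\rho=c\,t^{1/\alpha}$ the right-hand side is $C2^{\alpha/2}c^\alpha t$, and the left-hand side is $t-c\,t^{1/\alpha}$. Since $1/\alpha>1$, for $t$ small we have $c\,t^{1/\alpha}\le \tfrac12 t$, so it is enough that $\tfrac12 t> C2^{\alpha/2}c^\alpha t$, i.e. $c^\alpha<1/(C2^{1+\alpha/2})$, which is a condition on $c$ alone. Choosing such a $c$ gives $B_\rho(z)\subset\Gamma$, hence $\delta_\Gamma(z)\ge\rho=c\,t^{1/\alpha}\gtrsim t^{1/\alpha}$, completing the proof.

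The only mild subtlety — the ``main obstacle'' such as it is — is bookkeeping the competition between the linear term $t$ and the term $t^{1/\alpha}$ coming from the cusp exponent: since $0<\alpha<1$ we have $1/\alpha>1$, so $t^{1/\alpha}=o(t)$ as $t\to0$, which is exactly why the distance is governed by the $t^{1/\alpha}$ scale and why ``$0<t\ll1$'' is needed (for large $t$ the cusp opens up and the boundary distance would instead be comparable to $t$). Everything else is elementary: the defining function of $\Gamma$ is explicit, and one only compares a Euclidean ball around $z$ with the region where that defining inequality holds. No use of the conformal-mapping machinery from Section 2 is required for this lemma.
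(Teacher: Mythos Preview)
Your proof is correct and essentially matches the paper's argument: the upper bound uses the same explicit boundary point $w=(w',t)$ with $|w'|=(t/C)^{1/\alpha}$, and your lower bound via the ball inclusion $B_{c\,t^{1/\alpha}}(z)\subset\Gamma$ is the natural dual of the paper's case split on $\mathrm{Re}\,z_n^*$ for an arbitrary boundary point. You should strip the two false starts from the upper-bound paragraph in a final write-up, but the mathematics is fine.
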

\begin{proof}
We may assume that $p=0$ and $v=(0,\cdots,0,1)$. Set $z=(0,\cdots,0,\mathrm{Re}\,z_n)$ (i.e., $t=\mathrm{Re}\,z_n$). For any $z^*\in\partial\Gamma(p,v,C,\alpha)$, we define
\[w^*=(z^*_1,\cdots,z^*_{n-1},i\mathrm{Im}\,z^*_n)\in\mathbb{C}^{n-1}\times\mathbb{R}\]
so that $\mathrm{Re}\,z_n^*=C|w^*|^\alpha$. In particular, if we take $z_0^*=(C^{-1/\alpha}t^{1/\alpha},0,\cdots,0,t)\in\partial\Gamma(p,v,C,\alpha)$, then
\[\delta_\Gamma(z)\leq|z-z_0^*|=C^{-1/\alpha}|z-p|^{1/\alpha}.\]
On the other hand, we have
\[
|z-z^*|^2=|w^*|^2+|\mathrm{Re}\,(z_n-z_n^*)|^2.
\]
We shall devide the argument into the following two cases:

(1) If $\mathrm{Re}\,z^*_n\leq\frac{1}{2}\mathrm{Re}\,z_n$, then $\mathrm{Re}\,(z_n-z^*_n)\geq\frac{1}{2}\mathrm{Re}\,z_n>0$, so that
\[
|z-z^*|\geq|\mathrm{Re}\,(z_n-z^*_n)|\geq\frac{1}{2}\mathrm{Re}\,z_n=\frac{1}{2}t\geq\frac{1}{2}t^{1/\alpha};
\]

(2) If $\mathrm{Re}\,z^*_n>\frac{1}{2}\mathrm{Re}\,z_n$, then we have
\begin{eqnarray*}
|z-z^*|\geq|w^*| &=& \left(\frac{1}{C}\right)^{1/\alpha}\left(\mathrm{Re}\,z^*_n\right)^{1/\alpha}>\left(\frac{1}{2C}\right)^{1/\alpha}\left(\mathrm{Re}\,z_n\right)^{1/\alpha}\\
&=& \left(\frac{1}{2C}\right)^{1/\alpha}t^{1/\alpha}.
\end{eqnarray*}
Hence $\delta_\Gamma(z)\geq{Bt^{1/\alpha}}$ with $B:=\min\{1/2,(2C)^{-1/\alpha}\}$.
\end{proof}

\begin{remark}
Let $\Omega$ be a domain satisfying the $(C,\alpha,r)$-cusp condition and $z\in\Omega$ satisfying the conditions in Definition \ref{def:cusp}. Lemma \ref{lm:cusp} together with \eqref{eq:finite cusp distance} imply that
\[
|z-p|\lesssim\delta_\Gamma(z)^\alpha=\delta_{\Gamma\cap{B_r(p)}}(z)^\alpha\leq\delta_\Omega(z)^\alpha.
\]
That is, if $z$ is sufficiently close to the boundary, then $|z-p|$ is also sufficiently small.
\end{remark}

Recall that $\Omega$ is a bounded H\"{o}lder domain if $\partial\Omega$ is locally the graph of a H\"{o}lder continuous function. More precisely, there exist

(1) a finite open covering $\{V_j\}$ of $\partial\Omega$;

(2) $p_j\in{V_j}$;

(3) a unit vector $v_j$;

(4) a neighbourhood $V'_j$ of $0$ in $\partial{H_{v_j}}$  with $\pi_{v_j}(w-p_j)\in{V'_j}$ for all $w\in{V_j}$;

(5) a H\"{o}lder continuous function $h_j$ of order $\alpha_j$ on $V'_j$ with $h_j(0)=0$, such that
\[
\Omega\cap{V_j}=\{w\in{V_j};\ \mathrm{Re}\,\langle{w-p_j,v_j}\rangle>h_j(\pi_{v_j}(w-p_j))\}.
\]

\begin{proposition}\label{prop:Holder boundary domain}
If $\Omega\subset\mathbb{C}^n$ is a bounded H\"{o}lder domain, then there exist constants $C,\alpha$ and $r$ such that $\Omega$ satisfies the $(C,\alpha,r)$-cusp condition. Moreover precisely, $\alpha=\min\{\alpha_j\}$, where $\alpha_j$ is given as above.
\end{proposition}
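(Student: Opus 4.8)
The plan is to read each defining H\"older graph directly as a cusp that fits under it, after uniformizing the H\"older data over the finite atlas. First I would record that $|h_j(\zeta)-h_j(\zeta')|\le L_j|\zeta-\zeta'|^{\alpha_j}$ on $V'_j$ for some $L_j>0$, and set $\alpha:=\min_j\alpha_j$ and $C:=\max_j L_j$. Since $s^{\alpha_j}\le s^{\alpha}$ whenever $0<s\le1$, this yields the uniform bound
\[
h_j(\zeta)-h_j(\zeta')\le C\,|\zeta-\zeta'|^{\alpha}\qquad\text{for all }j\text{ and all }\zeta,\zeta'\in V'_j\text{ with }|\zeta-\zeta'|\le1,
\]
which is exactly the statement that a $(C,\alpha)$-cusp planted at a point of $\partial\Omega$ and opening in the $v_j$-direction stays over the graph of $h_j$. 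To make the ball radius uniform, I would observe that $\rho(q):=\max_j\mathrm{dist}(q,\mathbb{C}^n\setminus V_j)$ is continuous on $\mathbb{C}^n$ (a maximum of finitely many distance functions) and strictly positive on the compact set $\partial\Omega$, so $r_0:=\min_{\partial\Omega}\rho>0$; hence every $q\in\partial\Omega$ satisfies $q\in V_j$ and $B_{r_0}(q)\subset V_j$ for a suitable $j$. The constants claimed in the proposition are then this $C$, this $\alpha=\min_j\alpha_j$, and $r:=\min\{r_0/2,\,1/2\}$, all depending only on $\Omega$ and the chosen atlas.

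Next, given $z\in\Omega$ with $\delta_\Omega(z)$ small, I would let $q_0\in\partial\Omega$ realise $|z-q_0|=\delta_\Omega(z)$ and pick $j$ with $B_{r_0}(q_0)\subset V_j$. Passing to coordinates with $p_j=0$ and $v_j=e_n$, and writing $\xi(w):=\mathrm{Re}\,\langle w,v_j\rangle$, $\eta(w):=\pi_{v_j}(w)$, one has $\Omega\cap V_j=\{w\in V_j:\xi(w)>h_j(\eta(w))\}$ and, by continuity of $h_j$, $\partial\Omega\cap V_j=\{w\in V_j:\xi(w)=h_j(\eta(w))\}$. Since $\delta_\Omega(z)<r_0$, both $z$ and $q_0$ lie in $V_j$, and from $\xi(q_0)=h_j(\eta(q_0))$ together with the H\"older bound the ``vertical drop''
\[
t:=\xi(z)-h_j(\eta(z))=\big[\xi(z)-\xi(q_0)\big]+\big[h_j(\eta(q_0))-h_j(\eta(z))\big]
\]
satisfies $0<t\le\delta_\Omega(z)+L_j\,\delta_\Omega(z)^{\alpha_j}$, which tends to $0$ with $\delta_\Omega(z)$. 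I would take the vertex to be $p:=z-t\,v_j$, i.e. the point with $\eta(p)=\eta(z)$ and $\xi(p)=h_j(\eta(z))$; then $|z-p|=t$, and once $\delta_\Omega(z)$ is small enough that $t+\delta_\Omega(z)<r_0/2$ we get $p\in B_{r_0/2}(q_0)\subset V_j$, hence $p\in\partial\Omega\cap V_j$, and also $B_r(p)\subset B_{r_0/2}(p)\subset B_{r_0}(q_0)\subset V_j$.

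It then remains to check the three requirements of Definition \ref{def:cusp} for $\Gamma:=\Gamma(p,v_j,C,\alpha)$. Along the ray $\{p+sv_j:s>0\}$ one has $\pi_{v_j}(sv_j)=0$, so this ray is the axis of $\Gamma$ and $z=p+tv_j$ with $t>0$ lies on it, which is (1). For (2), let $w\in\Gamma\cap B_r(p)$: then $w\in V_j$, so $\eta(w)\in V'_j$ and $|\eta(w)-\eta(p)|\le|w-p|<r\le1$; combining $\xi(w)-\xi(p)>C|\eta(w)-\eta(p)|^{\alpha}$ with $\xi(p)=h_j(\eta(p))$ and $h_j(\eta(w))-h_j(\eta(p))\le C|\eta(w)-\eta(p)|^{\alpha}$ gives $\xi(w)>h_j(\eta(w))$, i.e. $w\in\Omega\cap V_j\subset\Omega$. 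Requirement (3), namely $|z-p|=t<r/2$, again holds once $\delta_\Omega(z)$ is sufficiently small. This would establish the $(C,\alpha,r)$-cusp condition with $\alpha=\min_j\alpha_j$.

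I do not expect a genuine obstruction here: the work is purely in the uniformization over the finite atlas (through compactness of $\partial\Omega$), in the elementary passage from a H\"older-$\alpha_j$ bound to a H\"older-$\alpha$ bound on the unit scale, and in the remark that a point at small boundary distance has a small vertical drop onto each local graph (which is where continuity of the $h_j$ is used). The one bookkeeping point I would settle at the outset is that the ambient half-space of $\Gamma(p,v,C,\alpha)$ is to be anchored at the vertex $p$; equivalently, one uses $\{z:\mathrm{Re}\,\langle z-p,v\rangle>0\}$ in place of $H_v$, so that near $p$ the only active constraint is $\mathrm{Re}\,\langle z-p,v\rangle>C|\pi_v(z-p)|^{\alpha}$. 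With that convention the computation above goes through verbatim.
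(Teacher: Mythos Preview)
Your proposal is correct and follows essentially the same route as the paper: both take the vertex $p$ to be the vertical projection $z^*$ of $z$ onto the graph of $h_j$, use the H\"older inequality for $h_j$ to see that the cusp $\Gamma(p,v_j,C,\alpha)\cap B_r(p)$ stays above the graph, and appeal to compactness of $\partial\Omega$ to make $C,\alpha,r$ uniform. The only cosmetic differences are that the paper first refines the cover to $\{U_j\}\subset\subset\{V_j\}$ so that $z^*$ and $B_r(z^*)$ automatically sit in the right chart (whereas you select the chart via a nearest boundary point $q_0$ and then check $|p-q_0|$ is small), and that the paper passes to the uniform constants $(C,\alpha)$ only at the end; your closing remark about anchoring the half-space at $p$ is a fair reading of the definition and is harmless since the cusp inequality already forces $\mathrm{Re}\,\langle z-p,v\rangle>0$.
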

\begin{proof}
Let $V_j,p_j,v_j,V'_j,h_j$ be as above. Suppose that
\begin{equation}\label{eq:Holder}
|h_j(x_1)-h_j(x_2)|\leq{C_j}|x_1-x_2|^{\alpha_j},\ \ \ x_1,x_2\in{V'_j}
\end{equation}
for some $C_j>0$. For any $w\in{V_j}$, let $w^*\in{\partial\Omega}$ be the point with
\[
\pi_{v_j}(w-p_j)=\pi_{v_j}(w^*-p_j).
\]
That is, $w=w^*+tv_j$ for some $t$. We may take another covering $\{U_j\}$ of $\partial\Omega$ with $p_j\in{U_j}\subset\subset{V_j}$, such that $w^*\in{U_j\cap\partial\Omega}$ and $|w-w^*|<r/2$ for some $0<r\ll1$ whenever $w\in{U_j}$. Moreover, we may take $r$ so small that $B_r(x)\subset{V_j}$ for all $x\in{U_j}\cap\partial\Omega$. Set
\[
U_j^+:=U_j\cap\Omega.
\]
Then any $z\in\Omega$ sufficiently close to $\partial\Omega$ lies in some $U_j^+$. For $p:=z^*$, it follows from \eqref{eq:Holder} and the definition of $U_j$ that $z\in\Gamma(p,v_j,C_j,\alpha_j)\cap{B_r(p)}$ and $|z-p|<r/2$. Moreover, for any $w\in\Gamma(p,v_j,C_j,\alpha_j)\cap{B_r(p)}$, we have $w\in{V_j}$ and
\begin{eqnarray*}
\mathrm{Re}\,\langle{w-p,v_j}\rangle &>& C_j|\pi_{v_j}(w-p)|^\alpha = C_j|\pi_{v_j}(w-p_j)-\pi_{v_j}(p-p_j)|^\alpha\\
&\geq& h(\pi_{v_j}(w-p_j))-h(\pi_{v_j}(p-p_j)).
\end{eqnarray*}
Since $p\in\partial\Omega$, we have
\[\mathrm{Re}\,\langle{p-p_j,v_j}\rangle=h(\pi_{v_j}(p-p_j)).\]
Hence $\mathrm{Re}\,\langle{w-p_j,v_j}\rangle>h(\pi_{v_j}(w-p_j))$, i.e.,
\[
\Gamma(p,v_j,C_j,\alpha_j)\cap{B_r(p)}\subset\Omega\cap{V_j}\subset\Omega.
\]
It suffices to take $\alpha=\min\{\alpha_j\}$ and $C=\max\{C_j\}$.
\end{proof}

\section{Proof of Theorem \ref{th:Hopf}}
The following lemma is essentially known, but we shall provide a proof for the sake of completeness.
\begin{lemma}\label{lm:Green function and nonpositive subharmonic functions}
Let $\Omega\subset\mathbb{C}$ be a bounded domain and $\rho\in{SH^-(\Omega)}$. If $\Delta_{R_1}(a)\subset\subset\Omega\subset\subset{\Delta_{R_2}(a)}$, then
\[
\rho(z)\leq\frac{\inf_{\partial{\Delta_{R_1}(a)}}(-\rho)}{\log({R_2}/{R_1})}\cdot g_\Omega(z,a)
\]
in a neighbourhood of $\partial{\Omega}$.
\end{lemma}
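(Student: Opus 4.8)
The plan is to compare $\rho$ with a scalar multiple of $g_\Omega(\cdot,a)$ near $\partial\Omega$ using the maximum principle on the annular region $\Omega\setminus\overline{\Delta_{R_1}(a)}$. First I would set $m:=\inf_{\partial\Delta_{R_1}(a)}(-\rho)\geq 0$ and $\kappa:=m/\log(R_2/R_1)$, and consider the function $w:=\rho-\kappa\,g_\Omega(\cdot,a)$ on the open set $U:=\Omega\setminus\overline{\Delta_{R_1}(a)}$. On $U$ the Green function $g_\Omega(\cdot,a)$ is harmonic (since $a\notin U$), so $w$ is subharmonic on $U$. I will show $w\leq 0$ on $U$ by controlling it on the two pieces of $\partial U$: the inner boundary $\partial\Delta_{R_1}(a)$ and the part of $\partial U$ lying on $\partial\Omega$ (and I must also address behavior near $a$'s exclusion, but that piece is exactly the inner circle).

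On the inner circle $\partial\Delta_{R_1}(a)$: here $\rho\leq -m$ by definition of $m$, while $g_\Omega(\cdot,a)\geq \log\frac{|z-a|}{R_2} = \log\frac{R_1}{R_2} = -\log(R_2/R_1)$; the lower bound on the Green function comes from comparing $g_\Omega(\cdot,a)$ with the harmonic function $z\mapsto\log\frac{|z-a|}{R_2}$, which is $\leq 0$ on $\Omega\subset\subset\Delta_{R_2}(a)$ and has the correct logarithmic singularity at $a$, hence lies below $g_\Omega(\cdot,a)$ by the defining extremal property of the Green function. Therefore on $\partial\Delta_{R_1}(a)$ we get $w\leq -m - \kappa\cdot(-\log(R_2/R_1)) = -m + m = 0$. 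Toward $\partial\Omega$: $g_\Omega(\cdot,a)\leq 0$ forces $-\kappa g_\Omega(\cdot,a)\geq 0$, which goes the wrong way, so instead I use that $g_\Omega(\cdot,a)\to 0$ and $\rho\leq 0$; more carefully, $\limsup_{z\to\zeta}w(z)\leq \limsup\rho(z) - \kappa\liminf g_\Omega(z,a)$. Since $\rho<0$ is only known to be negative (not to vanish at the boundary), I cannot conclude $w\leq 0$ on all of $\partial\Omega$ this way — and this is the point the lemma statement finesses by only claiming the inequality \emph{in a neighbourhood of }$\partial\Omega$.

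So the cleaner route, which I would actually carry out: apply the maximum principle to $w$ on $U$ using that $g_\Omega(\cdot,a)\to 0$ at $\partial\Omega$. Fix $\varepsilon>0$ and consider $w_\varepsilon := \rho - \kappa g_\Omega(\cdot,a) + \kappa\varepsilon$ is not quite it either; rather, I note $w$ is subharmonic and bounded above on $U$ (since $\rho\leq 0$ and $g_\Omega$ is bounded away from $a$), so $\limsup_{z\to\partial U}w(z)\leq 0$ would give $w\leq 0$. On $\partial\Delta_{R_1}(a)$ we showed $w\leq 0$. On the $\partial\Omega$ portion, $g_\Omega(z,a)\to 0$, so $w(z)\to \rho$-limit which is $\leq 0$; the subtlety is that $\rho$ may fail to extend continuously, but $\limsup_{z\to\zeta}\rho(z)\leq 0$ holds because $\rho\leq 0$ everywhere. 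Hence $\limsup_{z\to\partial U}w\leq 0$, and the maximum principle for subharmonic functions gives $w\leq 0$ throughout $U$, i.e. $\rho\leq\kappa\,g_\Omega(\cdot,a)$ on $U$, which is a neighbourhood of $\partial\Omega$ inside $\Omega$. The main obstacle is precisely the behavior of $g_\Omega(\cdot,a)$ and $\rho$ at $\partial\Omega$: one needs the (standard) fact that $g_\Omega(\cdot,a)$ tends to $0$ at the boundary — or at least that $\limsup_{z\to\partial\Omega}g_\Omega(z,a)\leq 0$, which is immediate since $g_\Omega\leq 0$ — combined with $\rho\leq 0$, to push the boundary $\limsup$ of $w$ down to $0$; everything else is a direct application of the Green function's extremal characterization and the maximum principle.
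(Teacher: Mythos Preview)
Your argument has a genuine gap at the outer boundary. To run the maximum principle on $U=\Omega\setminus\overline{\Delta_{R_1}(a)}$ you need, for every $\zeta\in\partial\Omega$,
\[
\limsup_{z\to\zeta}\bigl(\rho(z)-\kappa\,g_\Omega(z,a)\bigr)\le 0,
\]
and since $\kappa\ge 0$ this requires $\liminf_{z\to\zeta}g_\Omega(z,a)\ge 0$, i.e.\ $g_\Omega(\cdot,a)\to 0$ at $\zeta$. That is \emph{not} a standard fact for an arbitrary bounded domain: it holds exactly at regular boundary points and can fail at irregular ones. Your fallback ``or at least $\limsup_{z\to\partial\Omega}g_\Omega(z,a)\le 0$'' is the wrong inequality; it is trivially true but useless here, because the term $-\kappa g_\Omega\ge 0$ is what you must control from above, not $g_\Omega$ itself.

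The paper sidesteps this by exhausting $\Omega$ with smoothly bounded subdomains $\Omega_m\nearrow\Omega$. On each $\Omega_m$ the Green function $g_{\Omega_m}(\cdot,a)$ genuinely vanishes on $\partial\Omega_m$, so the comparison $\rho\le \dfrac{C_0}{\log(R_2/R_1)}\,g_{\Omega_m}(\cdot,a)$ on $\Omega_m\setminus\overline{\Delta_{R_1}(a)}$ follows from the ordinary maximum principle (same inner-circle estimate you wrote). Then one lets $m\to\infty$ and uses $g_{\Omega_m}\searrow g_\Omega$. Your direct route can be repaired, but only by invoking more: $w$ is bounded above on $U$, the irregular boundary points form a polar set, $g_\Omega(\cdot,a)\to 0$ at every regular point, and the extended maximum principle allows a polar exceptional set. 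That is substantially more potential theory than you stated, and the exhaustion argument is both shorter and self-contained.
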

\begin{proof}
Let $\{\Omega_m\}$ be a sequence of domains with smooth boundaries such that $\Omega_m\subset\subset\Omega_{m+1}$ and $\bigcup{\Omega_m}=\Omega$. Then $g_{\Omega_m}\searrow{g_\Omega}$ when $m\rightarrow\infty$. We may assume that $\Delta_{R_1}(a)\subset\subset{\Omega_m}\subset\subset{\Delta_{R_2}(a)}$. Thus
\[
g_{\Omega_m}(z,a)\geq\log\frac{|z-a|}{R_2},\ \ \ z\in\Omega_m.
\]
In particular,
\[
g_{\Omega_m}(z,a)\geq-\log\frac{R_2}{R_1},\ \ \ z\in\partial{\Delta}_{R_1}(a).
\]
For $C_0:=\inf_{\partial{\Delta_{R_1}(a)}}(-\rho)=-\sup_{\partial{\Delta_{R_1}(a)}}\rho$, we have
\[
\frac{\log(R_2/R_1)}{C_0}\rho\leq-\log\frac{R_2}{R_1},\ \ \ z\in\partial{\Delta}_{R_1}(a).
\]
Note that $g_{\Omega_m}(\cdot,a)=0$ on $\partial{\Omega_m}$. It follows from the maximum principle that
\[
\rho(z)\leq\frac{C_0}{\log(R_2/R_1)}g_{\Omega_n}(z,a)
\]
on $\Omega_m\setminus\overline{\Delta_{R_1}(a)}$. Letting $m\rightarrow\infty$, we complete the proof.
\end{proof}

\begin{proof}[Proof of Theorem \ref{th:Hopf}]
Let $R$ be the constant in Lemma \ref{lm:F} for planar $(C,\alpha)$-cusps. We may assume that $0<R<r$. Given $z\in\Omega$ sufficiently close to $\partial\Omega$ (i.e., $\delta_\Omega(z)\ll1$), there exists a $(C,\alpha)$-cusp $\Gamma(p,v,C,\alpha)$ satisfying the conditions (1)-(3) in Definition \ref{def:cusp} with $z$ lying on the axis. By the remark after Lemma \ref{lm:cusp}, we have $|z-p|\ll1$. We may identify
\[
D_p:=\{p+tv;\ t\in\mathbb{C}\}\cap\Gamma(p,v,C,\alpha)\cap{B_R(p)}
\]
with a domain in $\mathbb{C}$. Thus $\rho|_{D_p}$ is a subharmonic function on $D_p$. Set $a=p+Rv/2$. By Lemma \ref{lm:Green function and nonpositive subharmonic functions} and Theorem \ref{th:estimate of Green function cusp}, we have
\[
\rho(z)\lesssim g_{D_p}(z,a)\leq g_{D_p\cap{B_R(p)}}(z,a)\lesssim -\exp\left(-\frac{A}{|z-p|^{1/\alpha-1}}\right).
\]
Since $|z-p|\geq\delta_\Omega(z)$, we conclude the proof.
\end{proof}

\section{Proof of Theorem \ref{th:Holder Global}}
Recall that $\{U_j\}$ is a finite covering of $\partial\Omega$, $\rho_j$ is a negative psh function on $U_j$ with $\psi(-\delta_\Omega)\leq\rho_j\leq\varphi(-\delta_\Omega)$, and $\alpha_{\nu+1}=\psi^{-1}(\varphi(\alpha_\nu)/2)$ with $\alpha_1$ sufficiently close to $0$. We set
\begin{equation}\label{eq:definition of a nu}
a_\nu=\begin{cases}
\varphi(\alpha_\nu),\ \ \ &\text{if}\ \nu\ \text{is odd;}\\
\psi(\alpha_\nu),\ \ \ &\text{if}\ \nu\ \text{is even.}
\end{cases}
\end{equation}
From \eqref{eq:definition of alpha nu} and the fact that $\varphi\geq\psi$, we obtain
\begin{equation}\label{eq:property of a nu}
a_{2\nu}=a_{2\nu-1}/2,\ \ \ a_{2\nu+1}=\varphi\circ\psi^{-1}\big(\varphi\circ\psi^{-1}(a_{2\nu})/2\big)\geq{a_{2\nu}/2}.
\end{equation}
In particular, $\{a_\nu\}$ is an increasing sequence with $a_\nu\rightarrow0-$ as $\nu\rightarrow\infty$. We consider the following convex increasing function $\tau:(-\infty,0)\rightarrow[0,+\infty)$ introduced in \cite{CM}:
\[
\tau(x)=\begin{cases}
0,\ \ \ &x\leq{a_1},\\
\nu-\sum^{\nu-1}_{k=1}a_{k+1}/a_k-x/a_\nu,\ \ \ &a_\nu\leq{x}\leq{a_{\nu+1}},
\end{cases}
\]
which satisfies
\[
\tau(a_{\nu+1})-\tau(a_\nu)=1-\frac{a_{\nu+1}}{a_\nu}<1,\ \ \ \forall\,\nu\in\mathbb{Z}^+.
\]
On the other hand, we infer from \eqref{eq:property of a nu} that $\tau(a_{\nu+1})-\tau(a_\nu)\geq1/2$. Hence
\begin{equation}\label{eq:tau(a nu)}
\tau(a_\nu)\geq\frac{\nu}{2}-c_0
\end{equation}
for some constant $c_0>0$. If $z\in\Omega\cap{U_j}\cap{U_k}$ and $\alpha_{2\nu}\leq-\delta_\Omega(z)\leq\alpha_{2\nu+2}$, then it follows from \eqref{eq:local estimate2} and \eqref{eq:definition of a nu} that
\[
\min\{\rho_j(z),\rho_k(z)\}\geq\psi(\alpha_{2\nu})=a_{2\nu}
\]
and
\[
\max\{\rho_j(z),\rho_k(z)\}\leq\varphi(\alpha_{2\nu+2})\leq\varphi(\alpha_{2\nu+3})=a_{2\nu+3}.
\]
Thus $|\tau(\rho_j)-\tau(\rho_k)|<3$ on $\Omega\cap{U_j}\cap{U_k}$. Since $\tau$ is convex, we have
\begin{equation}\label{eq:tau}
|\tau(\rho_j-\varepsilon)-\tau(\rho_k-\varepsilon)|<3
\end{equation}
on $\Omega\cap{U_j}\cap{U_k}$ for any $\varepsilon>0$. Moreover,
\begin{eqnarray}\label{eq:tau rhoj 1}
\tau(\rho_j-\varepsilon) &\leq& \tau(a_{2\nu+3}-\varepsilon)\leq\tau(a_{2\nu}-\varepsilon)+3\nonumber\\
&=& \tau(\psi(\alpha_{2\nu})-\varepsilon)+3\\
&\leq& \tau(\psi(-\delta_\Omega)-\varepsilon)+3\nonumber
\end{eqnarray}
and
\begin{eqnarray}\label{eq:tau rhoj 2}
\tau(\rho_j-\varepsilon) &\geq& \tau(a_{2\nu}-\varepsilon)\geq\tau(a_{2\nu+2}-\varepsilon)-2\nonumber\\
&=& \tau(\psi(\alpha_{2\nu+2})-\varepsilon)-2\\
&\geq& \tau(\psi(-\delta_\Omega)-\varepsilon)-2.\nonumber
\end{eqnarray}

Next we use the standard Richberg technique (compare \cite{Demailly}) to patch these $\rho_j$ together. Choose $U_j''\subset\subset{U_j'}\subset\subset{U_j}$ and $U_0\subset\subset\Omega$ such that $\overline{\Omega}\setminus{U_0}\subset\bigcup{U_j''}$. Take $\chi_j\in{C^\infty_0(U'_j)}$ with $\chi_j\equiv1$ in a neighbourhood of $\overline{U_j''}$. There are constant $M,N$ satisfying $|z|^2-M<0$ and
\begin{equation}\label{eq:N}
3\chi_j+N(|z|^2-M)\in{PSH(\mathbb{C}^n)}.
\end{equation}
Set
\[
u_{j,\varepsilon}(z):=\tau(\rho_j(z)-\varepsilon)+3\chi_j(z)-3+N(|z|^2-M).
\]
Then $u_{j,\varepsilon}$ is a plurisubharmonic function on $U_j$, and it follows from \eqref{eq:tau} that $u_{j,\varepsilon}<u_{k,\varepsilon}$ in a neighbourhood of $\Omega\cap\overline{U''_k}\cap\partial{U'_j}$. Choose $a$ so that $\sup_{U_0\cap{U_j}}\rho_j<a<0$ for all $j\geq1$ and fix $N\gg1$ such that \eqref{eq:N} holds and $\tau(a)+N(|z|^2-M)<0$ on $\overline{\Omega}$. Hence
\[
u_\varepsilon(z):=\max\{u_{j,\varepsilon}(z),\tau(a)+N(|z|^2-M)\}\in{PSH(\Omega)}
\]
when $\varepsilon\ll1$. Moreover, it follows from \eqref{eq:tau rhoj 1} and \eqref{eq:tau rhoj 2} that
\begin{equation}\label{eq:u epsilon}
\tau(\psi(-\delta_\Omega)-\varepsilon)-c_1\leq{u_\varepsilon}\leq\tau(\psi(-\delta_\Omega)-\varepsilon)+c_2
\end{equation}
for some constants $c_1,c_2>0$.

Now we shall derive a global estimate by the method in \cite{Chen20}. Fix $l\in\mathbb{Z}^+$ for a moment. We set
\[
w_\nu:=\frac{u_{-a_{2\nu+2l}}}{\tau(a_{2\nu+2l})}.
\]
For $\Omega_\nu:=\{\delta_\Omega>-\alpha_{2\nu}\}$, we infer from \eqref{eq:u epsilon} that
\[
\frac{\tau(a_{2\nu}+a_{2\nu+2l})-c_1}{\tau(a_{2\nu+2l})}\leq\sup_{\partial\Omega_\nu}w_\nu\leq\frac{\tau(a_{2\nu}+a_{2\nu+2l})+c_2}{\tau(a_{2\nu+2l})}
\]
and
\[\inf_{\Omega\setminus\Omega_{\nu+l}}w_\nu\geq\frac{\tau(\psi(\alpha_{2\nu+2l})+a_{2\nu+2l})-c_1}{\tau(a_{2\nu+2l})}=\frac{\tau(2a_{2\nu+2l})-c_1}{\tau(a_{2\nu+2l})}=\frac{\tau(a_{2\nu+2l-1})-c_1}{\tau(a_{2\nu+2l})}.\]
If we choose $l\gg1$ so that $(2l-1)/2>c_1+c_2$, then
\begin{eqnarray}\label{eq:kappa nu}
\kappa_\nu &:=& \frac{\inf_{\Omega\setminus\Omega_{\nu+l}}w_\nu-\sup_{\partial\Omega_\nu}w_\nu}{1-\sup_{\partial\Omega_\nu}w_\nu}\nonumber\\
&\geq& \frac{\tau(a_{2\nu+2l-1})-\tau(a_{2\nu}+a_{2\nu+2l})-c_1-c_2}{\tau(a_{2\nu+2l})-\tau(a_{2\nu}+a_{2\nu+2l})+c_1}\nonumber\\
&\geq& \frac{\tau(a_{2\nu+2l-1})-\tau(a_{2\nu})-c_1-c_2}{\tau(a_{2\nu+2l})-\tau(a_{2\nu-1})+c_1}\\
&\geq& \frac{(2l-1)/2-c_1-c_2}{2l+1+c_1}\nonumber\\
&\geq& c_3\nonumber
\end{eqnarray}
for some constant $c_3>0$.

The rest part of the proof is essentially parallel to \cite{Chen20}, which we still include here for the sake of completeness. Set $M_\nu:=\sup_{\Omega\setminus\Omega_\nu}(-\varrho_{\Omega,\overline{B}})$. For $z\in\partial\Omega_\nu$, we have
\begin{equation}\label{eq:maximum 1}
(1-w_\nu(z))M_\nu\geq\left[1-\sup_{\partial\Omega_\nu}w_\nu\right](-\varrho_{\Omega,\overline{B}}(z)).
\end{equation}
Since $\varrho_{\Omega,\overline{B}}(z)\rightarrow0$ when $z\rightarrow\partial\Omega$, \eqref{eq:maximum 1} also holds on $\partial\Omega_{\nu+k}$ for $k\gg1$. An equivalent statement of \eqref{eq:maximum 1} is
\begin{equation}\label{eq:maximum 2}
\varrho_{\Omega,\overline{B}}(z)\geq\frac{M_\nu}{1-\sup_{\partial\Omega_\nu}w_\nu}(w_\nu(z)-1),
\end{equation}
which actually holds for all $z\in\Omega_{\nu+k}\setminus\Omega_\nu$ by the maximal property of $\varrho_{\Omega,\overline{B}}$. Finally, letting $k\rightarrow\infty$, we conclude that \eqref{eq:maximum 2} remains valid for $z\in\Omega\setminus\Omega_{\nu+l}$, i.e.,
\[
-\varrho_{\Omega,\overline{B}}(z)\leq\frac{1-\inf_{\Omega\setminus\Omega_{\nu+l}}w_\nu}{1-\sup_{\partial\Omega_\nu}w_\nu}M_\nu=(1-\kappa_\nu)M_\nu,\ \ \ z\in\Omega\setminus\Omega_{\nu+l}.
\]
This combined with \eqref{eq:kappa nu} gives
\[
M_{\nu+l}\leq(1-c_3)M_\nu.
\]
Thus
\[-\varrho_{\Omega,\overline{B}}(z)\gtrsim(1-c_3)^{\nu/l}=\exp\left(-\frac{\nu}{l}\log\frac{1}{1-c_3}\right)\]
for $z\in\Omega\setminus\Omega_\nu$. The assertion follows immediately from the definition of $\lambda(t)$.
\qed

\section{Appendix: Thinness at the Vertex of a Closed Cusp}
Recall that a boundary point $x_0$ of a domain $\Omega\subset\mathbb{R}^m$ is regular if and only if $\Omega$ admits a barrier at $x_0$. This is also equivalent to the thinness of $\mathbb{R}^m\setminus\Omega$ at $x_0$ (cf. \cite{Landkof}, Theorem 4.8 and 5.10). Thinness can be characterized by using Wiener's criterion. For a compact set $K\subset\mathbb{R}^m$ ($m\geq3$), we define the capacity of $K$ by
\[\mathrm{Cap}\,(K):=\inf\left\{\int_{\mathbb{R}^m\setminus{K}}|\nabla\varphi|^2;\ \varphi\in{C^1_0(\mathbb{R}^m)},\ \varphi|_K\geq1\right\}.\]
Indeed, $\mathrm{Cap}\,(\cdot)$ is precisely the Newtonian capacity up to a constant multiplier (cf. \cite{Choquet}, Chapter V, 25). Wiener's criterion asserts that a closed set $E\subset\mathbb{R}^m$ ($m\geq3$) is thin at some $x_0\in\partial{E}$ if and only if
\[
\sum^\infty_{k=1}2^{k(m-2)}\mathrm{Cap}\,(E_k)<\infty,
\]
where $E_k:=E\cap\{x;\ 2^{-k-1}\leq|x-x_0|\leq2^{-k}\}$ (cf. \cite{Landkof} Theorem 5.2).

Consider the closed cusp
\[
\overline{\Gamma}:=\left\{z\in\mathbb{C}^n;\ \mathrm{Im}\,z_n\geq{C}(|z'|^2+(\mathrm{Re}\,z_n)^2)^{\alpha/2}\right\}\subset\mathbb{C}^n=\mathbb{R}^{2n}.
\]
Clearly, $\overline{\Gamma}$ is not thin at the vertex when $n=1$ since $\mathbb{C}\setminus\overline{\Gamma}$ is simply connected (cf. \cite{Ransford}, Theorem 4.2.1). Cusps can be also defined in real Euclidean spaces and every closed cusp in $\mathbb{R}^3$ is not thin at the vertex (cf. \cite{Landkof}, Chapter V, \S 1, No.3). In contrast, the following conclusion holds

\begin{proposition}\label{prop:thin}
Every closed cusp in $\mathbb{C}^n=\mathbb{R}^{2n}$ is thin at the vertex when $n\geq2$.
\end{proposition}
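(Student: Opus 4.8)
The plan is to verify Wiener's criterion for the closed cusp
\[
\overline{\Gamma}=\left\{z\in\mathbb{C}^n;\ \mathrm{Im}\,z_n\geq{C}(|z'|^2+(\mathrm{Re}\,z_n)^2)^{\alpha/2}\right\}\subset\mathbb{R}^{2n}
\]
at the vertex $0$, that is, to show $\sum_{k\geq1}2^{k(2n-2)}\mathrm{Cap}\,(\overline{\Gamma}_k)<\infty$, where $\overline{\Gamma}_k=\overline{\Gamma}\cap\{2^{-k-1}\leq|z|\leq2^{-k}\}$. The heart of the matter is a good upper bound for $\mathrm{Cap}\,(\overline{\Gamma}_k)$. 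In the dyadic annulus $|z|\approx 2^{-k}$, the ``thickness'' of the cusp in the directions transverse to the $\mathrm{Im}\,z_n$-axis is of order $(2^{-k})^{1/\alpha}=2^{-k/\alpha}$, since the defining inequality forces $(|z'|^2+(\mathrm{Re}\,z_n)^2)^{1/2}\lesssim(\mathrm{Im}\,z_n/C)^{1/\alpha}\lesssim 2^{-k/\alpha}$. Thus $\overline{\Gamma}_k$ is contained in a ``cylinder'' of length $\approx 2^{-k}$ (along the axis direction) and cross-sectional radius $\approx 2^{-k/\alpha}$ in the remaining $2n-1$ real dimensions. Since $0<\alpha<1$, we have $1/\alpha>1$, so this cross-sectional radius is much smaller than $2^{-k}$ — the set really is a thin tube.

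The key step is to estimate the Newtonian capacity of such a thin tube. I would cover $\overline{\Gamma}_k$ by roughly $N_k\approx 2^{-k}/2^{-k/\alpha}=2^{k(1/\alpha-1)}$ balls of radius $\approx 2^{-k/\alpha}$, and use subadditivity of capacity together with the standard fact that in $\mathbb{R}^m$ ($m\geq 3$) a ball of radius $\rho$ has capacity $\approx\rho^{m-2}$. With $m=2n$ this gives
\[
\mathrm{Cap}\,(\overline{\Gamma}_k)\lesssim N_k\cdot(2^{-k/\alpha})^{2n-2}=2^{k(1/\alpha-1)}\cdot 2^{-k(2n-2)/\alpha}.
\]
Plugging into Wiener's sum, the $k$-th term is bounded by
\[
2^{k(2n-2)}\cdot 2^{k(1/\alpha-1)}\cdot 2^{-k(2n-2)/\alpha}=2^{k\left[(2n-2)(1-1/\alpha)+(1/\alpha-1)\right]}=2^{k(1/\alpha-1)(3-2n)}.
\]
When $n\geq 2$ we have $3-2n\leq -1<0$ and $1/\alpha-1>0$, so the exponent is strictly negative and the geometric series converges; hence $\overline{\Gamma}$ is thin at the vertex. (The case $n=1$ gives exponent $(1/\alpha-1)>0$, consistent with non-thinness.)

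I expect the main obstacle to be making the covering estimate fully rigorous, i.e., justifying that $\overline{\Gamma}_k$ is genuinely contained in the union of $\approx 2^{k(1/\alpha-1)}$ balls of radius comparable to $2^{-k/\alpha}$. Concretely one parametrizes the axis interval $\mathrm{Im}\,z_n\in[2^{-k-1},2^{-k}]$ by a net of $\approx 2^{k(1/\alpha-1)}$ points spaced $\approx 2^{-k/\alpha}$ apart, and checks that over each such sub-interval the transverse slice of $\overline{\Gamma}_k$ (a ball of radius $\lesssim 2^{-k/\alpha}$ in the $(z',\mathrm{Re}\,z_n)$-variables) together with the short axis segment fits inside one ball of radius $O(2^{-k/\alpha})$. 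This is an elementary but slightly fiddly geometric verification; none of the capacity facts themselves are hard, as they are classical (see \cite{Landkof}, \cite{Choquet}). An alternative to the covering argument would be to bound $\mathrm{Cap}\,(\overline{\Gamma}_k)$ from above by the capacity of the enclosing cylinder and invoke the known capacity asymptotics for thin cylinders/needles in $\mathbb{R}^m$, but the explicit ball-covering bound is cleaner and entirely self-contained.
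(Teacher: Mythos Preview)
Your argument is correct and yields exactly the same Wiener-series exponent $-(2n-3)(1/\alpha-1)$ as the paper. The route, however, is a little different. The paper also encloses $E_k$ in the cylinder $F_k=\overline{B_{(2^{-k}/C)^{1/\alpha}}(0)}\times[2^{-k-2},2^{-k+1}]\subset\mathbb{R}^{2n-1}\times\mathbb{R}$, but instead of covering by balls it estimates $\mathrm{Cap}\,(F_k)$ directly via the anisotropic dilation $T$ sending the fixed set $F=\overline{B_1(0)}\times[1/4,2]$ onto $F_k$: one pulls back a fixed test function $\psi$ for $F$ to a test function $\varphi=\psi\circ T^{-1}$ for $F_k$ and reads off the scaling of the Dirichlet integral, obtaining $\mathrm{Cap}\,(F_k)\lesssim 2^{-\{(2n-3)/\alpha+1\}k}$. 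Your ball-covering plus subadditivity gives the identical bound, since $N_k\cdot(2^{-k/\alpha})^{2n-2}=2^{-\{(2n-3)/\alpha+1\}k}$. The scaling argument avoids the (admittedly elementary) covering-number bookkeeping you flagged as ``slightly fiddly,'' while your approach avoids tracking how gradients and Jacobians transform under anisotropic dilations; neither has a real advantage, and both are fully rigorous once the containment $E_k\subset F_k$ is noted.
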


This result might be known. However, we still provide a proof since we cannot find the result in literature explicitly.

\begin{proof}[Proof of Proposition \ref{prop:thin}]
Take $m=2n$ and $E=\overline{\Gamma}$. It follows that
\begin{eqnarray*}
E_k&\subset&\{z\in\mathbb{C}^n;\ |z'|^2+(\mathrm{Re}\,z_n)^2\leq(2^{-k}/C)^{2/\alpha},\ 2^{-k-2}\leq\mathrm{Im}\,z_n\leq{2^{-k+1}}\}\\
&=&\overline{B_{(2^{-k}/C)^{1/\alpha}}(0)}\times[2^{-k-2},2^{-k+1}]\subset\mathbb{R}^{2n-1}\times\mathbb{R}.
\end{eqnarray*}
Set $F_k:=\overline{B_{(2^{-k}/C)^{1/\alpha}}(0)}\times[2^{-k-2},2^{-k+1}]$. The affine mapping
\[T:\mathbb{C}^n\rightarrow\mathbb{C}^n,\ \ \ z\mapsto\left((2^{-k}/C)^{1/\alpha}z',(2^{-k}/C)^{1/\alpha}\mathrm{Re}\,z_n,2^{-k}\mathrm{Im}\,z_n\right)\]
maps the set $F:=\overline{B_1(0)}\times[1/4,2]$ onto $F_k$. For any $\varphi\in{C^1_0(\mathbb{C}^n)}$ with $\varphi|_{F_k}\geq1$, we have
\begin{eqnarray*}
\int_{F_k}|\nabla_x\varphi(x)|^2dx &=& \int_{F}\left|(\nabla_x\varphi)(Ty)\right|^2\left|\det{T}'(y)\right|dy\\
&\lesssim& 2^{2k/\alpha}\times2^{-(2n-1)k/\alpha}\times{2^{-k}}\int_F|\nabla_y(\varphi\circ{T})|^2\\
&=& 2^{-\{(2n-3)/\alpha+1\}k}\int_F|\nabla_y(\varphi\circ{T})|^2.
\end{eqnarray*}
Therefore,
\[\mathrm{Cap}\,(F_k)\lesssim2^{-\{(2n-3)/\alpha+1\}k}\]
and
\[\sum^\infty_{k=1}2^{k(2n-2)}\mathrm{Cap}\,(E_k)\leq\sum^\infty_{k=1}2^{k(2n-2)}\mathrm{Cap}\,(F_k)\lesssim\sum^\infty_{k=1}2^{-(2n-3)(1/\alpha-1)k}<\infty,\]
i.e., $\overline{\Gamma}$ is thin at the vertex when $n\geq2$.
\end{proof}

\end{document}